\newtheorem{theorem}{Theorem}[section]
\newtheorem{definition}{Definition}[section]
\newtheorem{lemma}[theorem]{Lemma}
\newtheorem{proposition}[theorem]{Proposition}
\newtheorem{remark}[theorem]{Remark}
\newtheorem{example}{Example}[section]
\numberwithin{equation}{section}
\renewcommand{\c}{\circ}
\renewcommand{\ker}{\textnormal{ker}}
\newcommand{\gyr}{\textnormal{gyr}}
\begin{document}
	\title{Gyrogroup extensions and semi cross product}

\author{Akshay Kumar$^{1}$,
Mani Shankar Pandey$^{2}$,  Seema Kushwaha$^{3}$ AND \\Sumit Kumar Upadhyay$^{4}$\vspace{.4cm}\\
{Department of Applied Sciences,\\ Indian Institute of Information Technology Allahabad\\Prayagraj, U. P., India} }


\thanks{$^1$mehtaaksh4@gmail.com, $^2$manishankarpandey4@gmail.com, $^3$seema28k@gmail.com, $^4$upadhyaysumit365@gmail.com}
\thanks {2020 Mathematics Subject classification : 20A99; 19C09; 20C99; 20N05; 05E18}

	\keywords{Gyrogroup; Extension; Factor system}
		
	\begin{abstract} 
	The aim of the article is to study the extension theory of gyrogroups under certain conditions. Consequently, we see that there is an equivalence between the category GEXT of group-gyro extensions and the category GFAC of group-gyro factor systems.  We also give a notion of semi cross product of a group and  a gyrogroup which is a construction method of a larger class of gyrogroups. 
	\end{abstract}
	
\maketitle
\section{Introduction}
In 1988,  Ungar introduced a non-associative algebraic structure, {\it gyrogroup,} which is a generalization of groups. 
The concept of gyrogroups first came into the picture
in the study of Einstein’s relativistic velocity addition law \cite{ungar_thomas,ungar_book}. In analogy to the group theory, Foguel and Ungar (\cite{foguel_involutory}, Definition 4.8 and Lemma 4.9) gave a concept of a normal subgroup of a gyrogroup such that quotient of a gyrogroup by a normal subgroup is a gyrogroup. Thus for a normal subgroup $H$ of a gyrogroup $G$, we have the following short exact sequence \[\xymatrix{
E \equiv \{e\}\ar[r] & H\ar[r]^{i} & G \ar[r]^{\nu} & G/H\ar[r] & \{e\}
},
\] where $i$ is the inclusion map and $\nu$ is the natural quotient gyrogroup homomorphism. 

Let $G$ and $K$ be two gyrogroups. Let $\phi: G \to K$ be a gyrogroups homomorphism. In 2015, authors proved that $G/\ker(\phi)$  is a gyrogroup (\cite{suksumran_isomorphism}, Theorem 27). Thus we have the following short exact sequence \[\xymatrix{
E \equiv \{e\}\ar[r] & \ker(\phi)\ar[r]^{i} & G \ar[r]^{\phi} & K\ar[r] & \{e\}
}.
\]
By the above two discussions, one can talk about short exact sequences of gyrogroups.
Schreier's extension theory for groups was developed to classify all groups $G$ having $H$ as a normal subgroup such that ${G}/{H}$ is isomorphic to $K$, for any two groups $H$ and $K$ (for details, see \cite{rjl_algebra2}). In analogous to groups, we pose the following questions:
\begin{enumerate}
	\item For a given group $H$,  classify all gyrogroups $G$ such that $G/H$ is a gyrogroup.
	\item For a given gyrogroup $H$,  classify all gyrogroups $G$ such that $G/H$ is a group.
	\item For a given gyrogroup $H$,  classify all gyrogroups $G$ such that $G/H$ is a gyrogroup.
\end{enumerate}

In \cite{Bruck}, R. H. Bruck discussed extensions of loops. With the help of these extensions, more examples of gyrocommutative gyrogroups can be obtained.   In 2000, Rozga \cite{rozga} discussed the central extensions for gyrocommutative gyrogroups. The author found that at least for a class of gyrocommutative gyrogroups, a cocycle equation emerges in a natural way. 
Recently, Lal and Kakkar \cite{vkakkar} discussed a few results on extensions for group based gyrogroups. In Section 3 of the article,  we try to develop Schreier's extension theory to classify all gyrogroups $G$ having $H$ as a normal subgroup such that ${G}/{H}$ is isomorphic to $K$, for any group $H$ and gyrogroup $K$. That is, we discuss the first question. 

Section 4 is dedicated to constructions of gyrogroups. For this, we introduce a notion of semi cross product $H\Join K$, for a group $H$ and a gyrogroup $K$. We also show that for any split extension \[
\xymatrix{
E \equiv \{e\}\ar[r] & H\ar[r]^{i} & G \ar[r]^{\beta} & K\ar[r] & \{e\}
}
\] of $H$ by $K$, $G\cong H\Join K$. More precisely, we give examples of gyrogroups of order 24, 32 as well as of infinite order.

\section{preliminaries}
In this section, we give basic definition and terminology of gyrogroups that will be used later in this article. 

\begin{definition} \label{gyrogroup}  A pair $(G,\c)$ consisting of a non-empty set $G$ and a binary operation $\c$ on $G$ is called a {\it gyrogroup} if its binary operation $\c$ satisfies the following axioms. 
	\begin{enumerate}
		\item There exists an element $e\in G$, called
		a left identity, such that $$e\c a=a\,\,\text{ for\, all } a\in G.$$
\item For each $a\in G,$ there exists an element $a^{-1}\in G$, called a left inverse of $a,$ such that $a^{-1}\c a=e$.
\item For $a,b\in G,$ there exists an automorphism $\gyr[a,b]$ such that 
$$ a\c(b\c c)= (a\c b)\c \gyr[a,b](c)$$
for all $a,b,c\in G.$
\item $\gyr[a\c b, b]=\gyr[a,b]$ for all $a,b\in G.$
	\end{enumerate} 
	\end{definition}
	\begin{definition}(\cite{foguel_involutory})
	 A non-empty subset $H$ of a gyrogroup $(G,\c)$ is a subgroup if it is a group under the restriction of $\c$ to $H$.
	\end{definition}
	\begin{definition}(\cite{foguel_involutory}) A subgroup $H$ of a gyrogroup $(G,\c)$ is normal in $G$ if
	\begin{enumerate}
	\item $\gyr[g, h] = I_G$ for all $h\in H$ and $g\in G$;
	\item $\gyr[g, g'](H) \subseteq (H)$ for all $g, g'\in G$;
	\item $g \c H = H \c g$ for all $g\in G$.
	\end{enumerate}
	\end{definition}
	\begin{lemma}(\cite{foguel_involutory})
	If $H$ is a normal subgroup of a gyrogroup $G$, then $G/H$ forms a factor gyrogroup.
	\end{lemma}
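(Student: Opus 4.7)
The plan is to give $G/H$ a gyrogroup structure by descending the operation from $G$ and, crucially, by descending the gyroautomorphisms. Writing cosets as $gH = \{g\c h : h\in H\}$, which by condition (3) equal $Hg$, I would define
\[
(gH)\star(g'H) := (g\c g')H,\qquad \gyr[gH,g'H](g''H) := \gyr[g,g'](g'')\,H.
\]
First I would verify well-definedness of $\star$. Given $g_1H=g_2H$ and $g_1'H=g_2'H$, write $g_1=g_2\c h$ and $g_1'=g_2'\c h'$ for suitable $h,h'\in H$. Using the gyroassociative law together with condition (1) ($\gyr[x,h]=I_G$ whenever $h\in H$), I can successively absorb $h$ and $h'$, then use $H\c g=g\c H$ to push the residual element of $H$ to the right, concluding $(g_1\c g_1')H=(g_2\c g_2')H$. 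The left identity is $H=eH$ and a left inverse for $gH$ is $g^{-1}H$, both immediate from the axioms in $G$.

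The main obstacle is well-definedness of $\gyr[gH,g'H]$ on $G/H$, together with the checks that it is a gyroautomorphism of $G/H$. Independence of the third argument uses condition (2): since $\gyr[g,g']$ is an automorphism of $(G,\c)$ and $\gyr[g,g'](H)\subseteq H$, we get
\[
\gyr[g,g'](g''\c H)=\gyr[g,g'](g'')\c\gyr[g,g'](H)\subseteq\gyr[g,g'](g'')\,H,
\]
so the formula does not depend on the representative $g''$. Independence of the first two arguments is where condition (1) carries the weight: replacing $g$ by $g\c h$ (with $h\in H$) and using the loop property together with $\gyr[g,h]=I_G$, one shows $\gyr[g\c h,g']=\gyr[g,g']$ as maps, and symmetrically in the second slot, so that $\gyr[gH,g'H]$ is unambiguously defined. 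Being the descent of an automorphism of $(G,\c)$ that preserves $H$ setwise, $\gyr[gH,g'H]$ is an automorphism of $(G/H,\star)$.

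Finally I would verify the remaining axioms. Gyroassociativity in $G/H$,
\[
(gH)\star\bigl((g'H)\star(g''H)\bigr)=\bigl((gH)\star(g'H)\bigr)\star\gyr[gH,g'H](g''H),
\]
follows by applying gyroassociativity in $G$ to representatives and taking cosets, using well-definedness. The loop property $\gyr[(gH)\star(g'H),g'H]=\gyr[gH,g'H]$ follows analogously from $\gyr[g\c g',g']=\gyr[g,g']$ in $G$ together with well-definedness of the quotient gyroautomorphisms. Thus $(G/H,\star)$ satisfies all four axioms of Definition \ref{gyrogroup}, so it is a gyrogroup, i.e.\ the factor gyrogroup.
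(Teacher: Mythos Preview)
The paper does not supply a proof of this lemma; it is stated with a citation to Foguel--Ungar and left at that. Your outline is the standard argument and is sound: well-definedness of the product uses conditions (1) and (3) to absorb $H$-perturbations, well-definedness of the quotient gyroautomorphism in the third slot uses condition (2), and independence of the first two slots reduces to the identity $\gyr[g\c h,g']=\gyr[g,g']$ (and its analogue in the second slot), after which the gyrogroup axioms descend verbatim.

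One small sharpening: you attribute $\gyr[g\c h,g']=\gyr[g,g']$ to ``the loop property together with $\gyr[g,h]=I_G$'', but the loop property is not really the tool here. The clean route is the gyrator identity plus all three normality conditions: from $\gyr[g,h]=I_G$ one gets $(g\c h)\c x=g\c(h\c x)$; then $h\c g'=g'\c h_1$ by (3); then $g\c(g'\c h_1)=(g\c g')\c h_2$ with $h_2=\gyr[g,g'](h_1)\in H$ by (2); and finally left cancellation gives
\[
\gyr[g\c h,g'](c)=\bigl((g\c g')\c h_2\bigr)^{-1}\Bigl(\bigl((g\c g')\c h_2\bigr)\c\gyr[g,g'](c)\Bigr)=\gyr[g,g'](c).
\]
The same computation with the roles interchanged handles the second slot. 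With this adjustment your plan is complete.
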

Now, we state some gyrogroup identities which will be used further.
\begin{proposition} \cite{ungar_book} \label{gyro identity}
Let $(G, \c)$ be a gyrogroup. Then, for all $a,b, c\in G,$
\begin{enumerate}
\item $(a^{-1})(ab)=b$ (left cancellation);
\item $ab=e\Leftrightarrow ba=e$;
\item $\gyr[ab,a^{-1}]=\gyr[a,b]$;
\item $\gyr[a,b](c)=(ab)^{-1}(a(bc))$ (gyrator identity);
\item  $\gyr^{-1}[a, b]=\gyr[b,a]$;
\item $(ab)^{-1} = \gyr[a,b](b^{-1}a^{-1})$.
\end{enumerate}
\end{proposition}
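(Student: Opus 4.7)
The strategy is to prove the six identities in the listed order, since each depends on the previous ones, leveraging three tools: the left gyroassociative law, the left loop property $\gyr[ab,b]=\gyr[a,b]$, and the fact that every $\gyr[a,b]$ is an automorphism (hence fixes $e$). Before tackling (1), I would establish three auxiliary facts: $\gyr[e,b]=\id$, $\gyr[a,a^{-1}]=\id$, and $\gyr[a^{-1},a]=\id$. The first comes from comparing $e\c(b\c c)$ with $(e\c b)\c\gyr[e,b](c)$ and using the automorphism property; the other two come from applying the left loop property after carefully chosen substitutions, again combined with $\gyr[\cdot,\cdot](e)=e$.

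Granted those preliminaries, (1) is immediate: by left gyroassociativity,
$$a^{-1}(ab)=(a^{-1}a)\,\gyr[a^{-1},a](b)=e\c\gyr[a^{-1},a](b)=b.$$
For (2), I would expand $a\c(a^{-1}\c a)$ via gyroassociativity and collapse it using (1); this yields $aa^{-1}=e$, and as a by-product the right identity $a\c e=a$. The equivalence $ab=e\Leftrightarrow ba=e$ then follows because a right inverse, when paired against a left inverse via (1), is forced to coincide with it. For (3), one applies the left loop property to a suitable pair after rewriting one coordinate as $ab$ and the other as $a^{-1}$, with (1) and (2) handling the bookkeeping. Identity (4) is just the rearrangement of left gyroassociativity $a(bc)=(ab)\gyr[a,b](c)$ via left cancellation by $(ab)^{-1}$, which is now legitimate. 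Identity (5) follows by a symmetric computation showing $\gyr[b,a]\c\gyr[a,b](c)=c$, using (4) on both gyrators and simplifying by (1) and (2). Finally, for (6), substitute $c=b^{-1}a^{-1}$ into the gyrator identity (4) and simplify using (1) twice plus the right identity:
$$\gyr[a,b](b^{-1}a^{-1})=(ab)^{-1}\bigl(a(b(b^{-1}a^{-1}))\bigr)=(ab)^{-1}(aa^{-1})=(ab)^{-1}\c e=(ab)^{-1}.$$

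The hard part is the bootstrap at the very beginning: verifying $\gyr[a^{-1},a]=\id$ (and its companion $\gyr[a,a^{-1}]=\id$) before any form of left cancellation, two-sided inverses, or right identity is available. One must juggle the left loop property, the automorphism hypothesis, and special evaluations of left gyroassociativity at $e$, $a$, and $a^{-1}$ to escape the chicken-and-egg between $\gyr[a^{-1},a]=\id$ and left cancellation. Once this delicate first step is secured, the remaining five identities require only substitution into the axioms and clean algebraic manipulation.
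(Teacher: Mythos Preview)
The paper does not supply its own proof of this proposition; it is quoted verbatim from \cite{ungar_book} and left uncited beyond that reference, so there is no in-paper argument to compare against. Your outline is the standard route found in Ungar's book and is essentially correct: the genuine work is the bootstrap you flag---extracting $\gyr[e,b]=\id$, $\gyr[a^{-1},a]=\id$, the right identity, and the two-sided inverse from the one-sided axioms and the left loop property---after which (1), (4), (6) are direct substitutions, (2) is the usual left-inverse-equals-right-inverse argument, and (3), (5) follow from loop-property manipulations. The only place your sketch is thin is (3), where you should make explicit which substitution into $\gyr[a\c b,b]=\gyr[a,b]$ (or its consequence, the right loop property $\gyr[a,b\c a]=\gyr[a,b]$, once that is in hand) actually produces $\gyr[ab,a^{-1}]=\gyr[a,b]$.
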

\begin{definition} 
\begin{enumerate}
\item A short exact sequence 
\[
\xymatrix{
E \equiv \{e\}\ar[r] & H\ar[r]^{i} & G \ar[r]^{\beta} & K\ar[r] & \{e\}
}
\]
of gyrogroups is called an extension of $H$ by $K$, where $i$ is the inclusion map. A map $t: K \longrightarrow G$ is called a section of $E$ if $\beta t=Id_K$ and $t(e)=e$.
\item A morphism from an extension of gyrogroups
\[
\xymatrix{
E \equiv \{e\}\ar[r] & H\ar[r]^{i_1} & G \ar[r]^{\beta} & K\ar[r] & \{e\}
}
\]
to another extension of gyrogroups
\[
\xymatrix{
E' \equiv \{e\}\ar[r] & H'\ar[r]^{i_2} & G' \ar[r]^{\beta'} & K'\ar[r] & \{e\}
}
\]
is a triple $(\lambda, \mu, \nu)$, where $\lambda: H \longrightarrow H'$, $\mu: G \longrightarrow G'$ and $\nu : K \longrightarrow K'$ are gyrogroup homomorphisms such that the following diagram 
\begin{center}
\[
\xymatrix{
E \equiv \{e\}\ar[r] & H \ar[r]^{i_1} \ar[d]^{\lambda} & G \ar[r]^{\beta} \ar[d]^{\mu} & K \ar[r] \ar[d]^{\nu} & \{e\}\\
  E' \equiv \{e\} \ar[r] & H' \ar[r]^{i_2} & G' \ar[r]^{\beta'} & K' \ar[r] & \{e\}
}
\]
\end{center} 
is commutative. 
\end{enumerate}
\end{definition}


\section{Extension Theory of a group by a gyrogroup}
	Let	
	$\begin{tikzcd}
		E\equiv \{e\}\arrow{r}  & H \arrow{r}{i} &G\arrow{r}{\beta} &K\arrow{r}&\{e\}
	\end{tikzcd}$
be a gyrogroup extension of $H$ by $K$. Let $t$ be a section of $E$ and $g\in G$. Then 
$$\beta(t(\beta(g))g^{-1})=e \Rightarrow t(\beta(g))g^{-1}\in H.$$
Thus, there exists $h\in H$ such that 
\begin{align*}
&t(\beta(g))g^{-1}=h^{-1}\\
&\Rightarrow h(t(\beta(g))g^{-1})=e\\
&\Rightarrow (ht(x))\gyr [h,t(x)](g^{-1})=e, \ \text{where}\ \beta(g)=x\in K \\
&\Rightarrow (\gyr [h,t(x)](g^{-1}))((ht(x)))=e ~(\text{since}~ ab = e \Leftrightarrow ba = e)\\
&\Rightarrow ht(x)=\gyr [h,t(x)](g)\\
&\Rightarrow g=\gyr [t(x),h](ht(x)).
\end{align*}
\noindent
Therefore, given a  gyrogroup extension 	
$$\begin{tikzcd}
E\equiv \{e\}\arrow{r}  & H \arrow{r}{i} &G\arrow{r}{\beta} &K\arrow{r}&\{e\}	\end{tikzcd}$$ of $H$ by $K$ with a choice of section $t$, every $g\in G$ can be  written as
$$ g=\gyr [t(x),h](ht(x)), \text{for some $h\in H$ and $x\in K$}.$$ 
It is easy to see that this representation is unique. Now, we study gyrogroup extensions under a special condition.
\begin{definition}
A gyrogroup extension $\begin{tikzcd}
		E\equiv \{e\}\arrow{r}  & H \arrow{r}{i} &G\arrow{r}{\beta} &K\arrow{r}&\{e\}
	\end{tikzcd}$ of $H$ by $K$ is said to be a group-gyro extension if $\gyr[h,g]=I_G$, for all $h\in H$ and $g\in G.$ In fact, $H$ is a normal subgroup of the gyrogroup $G$.
\end{definition}
It can be easily seen that	if $\begin{tikzcd}
		E\equiv \{e\}\arrow{r}  & H \arrow{r}{i} &G\arrow{r}{\beta} &K\arrow{r}&\{e\}
	\end{tikzcd}$ is a group-gyro extension of $H$ by $K$ and $t$ is a section of $E$, then any $g\in G$ can be uniquely expressed as $g=ht(x)$.

Also, for each $h\in H$ and $x\in K,$  we have $(t(x)h)t(x)^{-1}=t(x)(ht(x)^{-1})\in H$. Thus, for $x\in K,$ there exists a map $\sigma_x^{t}:H\to H$ given by
\begin{equation}\label{1}
\sigma_x^t(h)=(t(x)h)t(x)^{-1}.
\end{equation}
In fact,  $\sigma_x^t\in Aut(H)$   for each $x\in K.$
Let $x,y\in K.$ Then 
\begin{align*}
&\beta(t(xy)(t(x)t(y))^{-1})=e\\
&\Rightarrow t(xy)(t(x)t(y))^{-1} = f^t(x, y)^{-1} ~\text{for some}~ f^t(x, y) \in H\\
&\Rightarrow  f^t(x, y)(t(xy)(t(x)t(y))^{-1}) = e\\
&\Rightarrow (f^t(x, y)t(xy))(t(x)t(y))^{-1}) =e\\
&\Rightarrow (t(x)t(y))^{-1})(f^t(x, y)t(xy)) =e\\
&\Rightarrow  t(x)t(y) = f^t(x, y)t(xy).
\end{align*}
Thus, there exists a map $f^t: K\times K\to H$ such that
\begin{equation}\label{2}
t(x)t(y)=f^t(x,y)t(xy).
\end{equation}
Since $t(e)=(e)$,
\begin{equation}\label{3}
f^t(x,e)=e=f^t(e,y) \ \text{for each}\  x,y\in K.
\end{equation}
Also, for $h\in H$ and $x,y\in K$
\begin{align*}
	t(x)(h t(y))&=(t(x) h) \gyr[t(x),h](t(y))\\
	&=(t(x) h t(x)^{-1} t(x)) t(y)\\
	&=((t(x) h t(x)^{-1} )t(x)) t(y) ~~\,(\text{since}~   \gyr[t(x)h, t(x)^{-1}] = I_G)\\
	&=(\sigma_x^t(h)t(x))t(y)\\
	&=\sigma_x^t(h)(t(x) \gyr[t(x), \sigma_x^t(h)](t(y)))\\
	&=\sigma_x^t(h)(t(x) t(y))\\
	&=\sigma_x^t(h)(f^t(x,y) t(xy))=(\sigma_x^t(h) f^t(x,y)) t(xy).
\end{align*}
Similarly,
\begin{align*}
(t(x)h) t(y)	&=(\sigma_x^t(h) f^t(x,y)) t(xy)=	t(x)(h t(y)).
\end{align*}
	Further,
	\begin{align*}
		(h t(x)) t(y)&=h(t(x) \gyr[t(x),h](t(y)))\\
		&=h(t(x) t(y))=h(f^t(x,y) t(xy))=(hf^t(x,y)) t(xy).
	\end{align*}
	Now, it is easy to observe that
	\begin{equation}\label{4}
	(h t(x)) (k t(y))=(h\sigma_x^t(k)f^t(x,y)) t(xy).	
	\end{equation}
By Equation \ref{2}, for $x\in K$ we have
\begin{align*}
&t(x^{-1})t(x)=f^t(x^{-1},x)\\
&\Rightarrow (f^t(x^{-1},x))^{-1}(t(x^{-1})t(x))=e\\
&\Rightarrow t(x)(f^t(x^{-1},x)^{-1}t(x^{-1}))=e\\
&\Rightarrow t(x)^{-1}=f^t(x^{-1},x)^{-1}t(x^{-1}).
\end{align*}
So, we have the following equation
\begin{equation}\label{5}
 t(x)^{-1}=f^t(x^{-1},x)^{-1}t(x^{-1}).
\end{equation}

Now, let $h,k,l\in H$ and $x,y,z\in K$. Then by the gyrator identity, we have
\begin{align*}
&\gyr[ht(x),kt(y)](lt(z))=((ht(x))(kt(y))^{-1}((ht(x))((kt(y)(lt(z)))\\
&= (h\sigma_x^t(k)f^t(x,y)t(xy))^{-1}((ht(x))(k\sigma^t_y(l)f^t(y,z)t(yz)))\\
&=(t(xy)^{-1}f^t(x,y)^{-1}\sigma_y^t(k)^{-1}h^{-1})(h\sigma_x^t(k\sigma_y^t(l)f^t(y,z))f^t(x,yz)t(x(yz)))\\
&=f^t((xy)^{-1},xy)^{-1}t((xy)^{-1})f^t(x,y)^{-1}\sigma_x^t(\sigma_y^t(l)f^t(y,z))f^t(x,yz)t(x(yz))\\
&=f^t((xy)^{-1},xy)^{-1} (\sigma_{(xy)^{-1}}^t(f^t(x,y)^{-1}\sigma_x^t(\sigma_y^t(l)f^t(y,z))f^t(x,yz))t((xy)^{-1}))t(x(yz))\\
&=f^t((xy)^{-1},xy)^{-1} (\sigma_{(xy)^{-1}}^t(f^t(x,y)^{-1}\sigma_x^t(\sigma_y^t(l)f^t(y,z))f^t(x,yz)))f^t((xy)^{-1},(x(yz))t(\gyr[x,y](z))
\end{align*}
Here, for $x, y \in K$, we define a function $F^t_{(x,y)}:H\times K\longrightarrow H$ such that 

\begin{equation}\label{6}
F^t_{(x,y)}(l,z)=f^t((xy)^{-1},xy)^{-1} (\sigma_{(xy)^{-1}}^t(f^t(x,y)^{-1}\sigma_x^t(\sigma_y^t(l)f^t(y,z))f^t(x,yz)))f^t((xy)^{-1},x(yz)).
\end{equation}
Therefore, 
\begin{equation}\label{gyroF}
\gyr[ht(x),kt(y)](lt(z))=F^t_{(x,y)}(l,z)t(\gyr[x,y](z)).
\end{equation}

Now we see the properties of the function $F^t$ which is defined in Equation \ref{6}.

\begin{proposition}\label{propF} For $x, y \in K$, $F^t_{(x,y)}$ satisfies the following properties:
	\begin{enumerate}
		\item $F^t_{(x,e)}(l,z)=l=F^t_{(e,y)}(l,z), \ \forall \ l\in H\ \text{and}\ z\in K. $
		\item For $l_1,l_2\in H$ and $z_1,z_2\in K,$ $$F^t_{(x,y)}(l_1\sigma_{z_1}^t(l_2)f^t(z_1,z_2),z_1z_2)=F^t_{(x,y)}(l_1,z_1)\sigma_{(\gyr[x,y](z_1))}^t(F^t_{(x,y)}(l_2,z_2))f^t(\gyr[x,y](z_1),\gyr[x,y](z_2)).$$
		\item  $F^t_{(xy,y)}=F^t_{(x,y)}.$
	\end{enumerate}
\end{proposition}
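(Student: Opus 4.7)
The plan is to avoid manipulating the explicit formula \eqref{6} directly and instead exploit the characterization \eqref{gyroF}, which identifies $F^t_{(x,y)}(l,z)$ as the $H$-component of $\gyr[ht(x),kt(y)](lt(z))$ in the unique decomposition $g=h\,t(x)$ available for a group-gyro extension. Because the left-hand side of \eqref{gyroF} visibly does not depend on $h$ or $k$, we are free to specialize these elements however we wish, and uniqueness of the decomposition will transfer each gyrogroup-level identity for $\gyr[ht(x),kt(y)]$ into the claimed identity for $F^t_{(x,y)}$.

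For part (1), the key observation is that when $y=e$ the element $kt(y)=k$ lies in $H$, so the group-gyro hypothesis together with $\gyr^{-1}[a,b]=\gyr[b,a]$ from Proposition \ref{gyro identity} forces $\gyr[ht(x),k]=I_G$. Thus $\gyr[ht(x),kt(e)](lt(z))=lt(z)$, and comparing with \eqref{gyroF} while using $\gyr[x,e]=I_K$ yields $F^t_{(x,e)}(l,z)=l$ by uniqueness of the decomposition. The symmetric case $x=e$ is identical, invoking $\gyr[h,kt(y)]=I_G$ directly.

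Part (2) is the heart of the proposition and rests on the automorphism property of $\gyr[ht(x),kt(y)]$ granted by axiom (3) of Definition \ref{gyrogroup}. I will apply this automorphism to the product $(l_1 t(z_1))(l_2 t(z_2))$ in two ways. On the one hand, expanding the product first via \eqref{4} and then applying \eqref{gyroF} produces $F^t_{(x,y)}\!\big(l_1\sigma_{z_1}^t(l_2)f^t(z_1,z_2),\,z_1 z_2\big)\,t(\gyr[x,y](z_1z_2))$. On the other hand, distributing the automorphism and recombining the two images via \eqref{4} gives $\big(F^t_{(x,y)}(l_1,z_1)\,\sigma_{\gyr[x,y](z_1)}^t(F^t_{(x,y)}(l_2,z_2))\,f^t(\gyr[x,y](z_1),\gyr[x,y](z_2))\big)\,t(\gyr[x,y](z_1)\gyr[x,y](z_2))$. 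Since $\gyr[x,y]$ is an automorphism of $K$, the two $t$-arguments coincide, and uniqueness of the decomposition produces the asserted equation.

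For part (3), I would invoke axiom (4) of Definition \ref{gyrogroup} at the level of $G$, namely $\gyr[(ht(x))(kt(y)),kt(y)]=\gyr[ht(x),kt(y)]$. Writing $(ht(x))(kt(y))=h't(xy)$ with $h'=h\sigma_x^t(k)f^t(x,y)$ via \eqref{4} turns this into $\gyr[h't(xy),kt(y)]=\gyr[ht(x),kt(y)]$. Applying both sides to $lt(z)$, using \eqref{gyroF}, and invoking the same axiom in $K$ to identify $\gyr[xy,y]=\gyr[x,y]$ (so the $t$-parts match), uniqueness once more delivers $F^t_{(xy,y)}(l,z)=F^t_{(x,y)}(l,z)$. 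The main potential obstacle throughout is merely bookkeeping with the $ht(x)$-decompositions; beyond axiom (4), the automorphism property of gyrations, and the group-gyro hypothesis, no further identity is needed.
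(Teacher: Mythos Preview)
Your proposal is correct and follows essentially the same approach as the paper: all three parts are proved by reading off the $H$-component in \eqref{gyroF} via uniqueness of the decomposition, using respectively $\gyr[ht(x),k]=\gyr[h,kt(y)]=I_G$, the automorphism property of $\gyr[ht(x),kt(y)]$ applied to $(l_1t(z_1))(l_2t(z_2))$, and the left loop property in $G$ combined with $\gyr[xy,y]=\gyr[x,y]$ in $K$. No meaningful difference from the paper's argument.
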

\begin{proof}
\begin{enumerate}
\item Since $\gyr[h,kt(y)]=I_G=\gyr[ht(x),k]$, we have
	\begin{equation*}\label{F^t}
		F^t_{(x,e)}(l,z)=l=F^t_{(e,y)}(l,z), \ \forall \ l\in H\ \text{and}\ x,y,z\in H. 
	\end{equation*}
\item Note  that $\gyr[x,y]$ is an automorphism, hence
	\begin{align*}\label{auto}
	&	\gyr[ht(x),kt(y)]((l_1t(z_1))(l_2t(z_2)))\\
	&	=\gyr[ht(x),kt(y)](l_1t(z_1))\gyr[ht(x),kt(y)](l_2t(z_2))\\
	&	=(F^t_{(x,y)}(l_1,z_1)t(\gyr[x,y]z_1))(F^t_{(x,y)}(l_2,z_2)t(\gyr[x,y](z_2)))\\
		&=F^t_{(x,y)}(l_1,z_1)\sigma_{\gyr[x,y](z_1))}^t(F^t_{(x,y)}(l_2,z_2))f^t(\gyr[x,y](z_1),\gyr[x,y](z_2))t(\gyr[x,y](z_1z_2)).
	\end{align*}
	Also,
	\begin{align*}
	\gyr[ht(x),kt(y)]((l_1t(z_1))(l_2t(z_2)))
	&	=\gyr[ht(x),kt(y)](l_1\sigma_{z_1}^t(l_2)f^t(z_1,z_2)t(z_1z_2))\\
	&=F^t_{(x,y)}(l_1\sigma_{z_1}^t(l_2)f^t(z_1,z_2),z_1z_2)t(\gyr[x,y](z_1z_2)).
 	\end{align*}
	On comparing both the expressions for the gyromap, we get
	\begin{equation*}\label{automorphism}
		F^t_{(x,y)}(l_1\sigma_{z_1}^t(l_2)f^t(z_1,z_2),z_1z_2)=F^t_{(x,y)}(l_1,z_1)\sigma_{(\gyr[x,y](z_1))}^t(F^t_{(x,y)}(l_2,z_2))f^t(\gyr[x,y](z_1),\gyr[x,y](z_2)).
	\end{equation*}
	\item 		Also,
	\begin{align*}\label{loop}
		\gyr[ht(x)kt(y),kt(y)](lt(z))&=\gyr[h\sigma_x^t(k)f^t(x,y)t(xy),kt(y)](lt(z))\\
	&= F^t_{(xy,y)}(l, z)t(\gyr[xy,y](z))
	\end{align*}
	\begin{align*}
	\gyr[ht(x),kt(y)](lt(z))
		&=F^t_{(x,y)}(l,z)t(\gyr[x,y](z))
			\end{align*}
Since
	$\gyr[ht(x)kt(y),kt(y)]=\gyr[ht(x),kt(y)]$ and 
	$\gyr[xy,y]= \gyr[x,y]$, $ F^t_{(xy,y)}=F^t_{(x,y)}$.
\end{enumerate}
	
\end{proof}

Also, let $h,k,l\in H$ and $x, y, z \in K$, 

$ht(x)((kt(y))(lt(z))) = ht(x)(k\sigma_y^t(l)f^t(y, z)t(yz))= h\sigma_x^t(k\sigma_y^t(l)f^t(y, z))(t(x)t(yz))= \\
h\sigma_x^t(k\sigma_y^t(l)f^t(y, z))f^t(x, yz)t(x(yz))$.

On the other hand, 

$(ht(x))(kt(y)))\gyr[ht(x),kt(y)](lt(z))= (h\sigma_x^t(k)f^t(x, y)t(xy))(F^t_{(x,y)} (l,z) t(\gyr[x,y](z)))=\\ h\sigma_x^t(k)f^t(x, y)\sigma_{xy}^t(F^t_{(x,y)} (l,z))f^t(xy, \gyr[x,y](z))t((xy)\gyr[x,y](z)).$

Since $ht(x)((kt(y))(lt(z))) = (ht(x))(kt(y)))\gyr[ht(x),kt(y)](lt(z))$ and $x(yz)= (xy)\gyr[x,y](z)$,  
\begin{equation}\label{1'}
\sigma_x^t(\sigma_y(l)f^t(y, z))f^t(x, yz)= f^t(x, y)\sigma_{xy}^t(F^t_{(x,y)} (l,z))f^t(xy, \gyr[x,y](z)).
\end{equation}
%
%
%
%

\begin{remark}\label{trivial} By using the assumption $\gyr[h,g]=I_G$, it is easy to see the following:
\begin{enumerate}
\item If $ht(x)=h't(x)$, then $h=h'$.
\item ${\sigma_x^t}^{-1}(h) = f^t(x^{-1}, x)^{-1}\sigma_{x^{-1}}^t(h)f^t(x^{-1}, x)$, for all $x \in K$ and $h\in H$.
\item $\sigma_x^t(f^t(x^{-1}, x)^{-1})f^t(x, x^{-1}) = e$
\item $\gyr[t(x)h, t(x)^{-1}]=I_G$, for all $x \in K$ and $h\in H$.
\end{enumerate}
\end{remark}

\begin{definition} \label{gyro factor system}
Let $H$ be a group and $K$ be a gyrogroup. Then a \textbf{group-gyro factor system} is a quintuplet $(K,H,\sigma,f,F)$, where $\sigma : K \longrightarrow Aut(H)$ is a map, $f$ is a map from $K \times K$ to $H$ and $F$ is a map from $K\times K$ to $H^{H\times K}$ satisfying the following relations:
\begin{enumerate}
\item $\sigma_e = I_H$;
\item $f(x,e)=e=f(e,y) \ \text{for each}\  x,y\in K$;
\item $f(x,y)\sigma_{xy}(F_{(x,y)}(l, z))f(xy, \gyr[x,y](z))=\sigma_x(\sigma_y(l)f(y,z))f(x,yz) \ \text{for each}\  x,y, z\in K$;
\item $F_{(x,e)}(l,z)=l=F_{(e,y)}(l,z), \ \forall \ l\in H\ \text{and}\ x,y,z\in H$; 
\item  $F_{(x,y)}(l_1\sigma_{z_1}(l_2)f(z_1,z_2),z_1z_2)\\=F_{(x,y)}(l_1,z_1)\sigma_{\gyr[x,y](z_1))}(F_{(x,y)}(l_2,z_2))f(\gyr[x,y](z_1),\gyr[x,y](z_2))$;
\item $F_{(xy,y)}=F_{(x,y)},\ \forall \ x,y\in K$.

\end{enumerate}
\end{definition}

\begin{proposition}
Every group-gyro extension $E$ of $H$ by $K$ with a choice of a section $t$ determines a group-gyro factor system $(K,H,\sigma^t, f^t, F^t),$ where $\sigma^t$, $f^t$ and $F^t$ are described by Proposition \ref{propF} and equations (\ref{1}), (\ref{3}), (\ref{1'}). 

Conversely, given a group-gyro factor system $(K,H, \sigma, f,F)$, there exists a group-gyro extension $E$ of $H$ by $K,$ and a section $t$ of $E$ such that the group-gyro factor system associated with $E$ is $(K,H,\sigma,f,F).$
\end{proposition}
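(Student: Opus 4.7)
The forward direction is essentially a bookkeeping exercise that collects results already derived. Axiom (1) follows from $t(e)=e$ together with the defining formula $\sigma_e^t(h)=(t(e)h)t(e)^{-1}=h$; axiom (2) is exactly equation (\ref{3}); axiom (3) is equation (\ref{1'}); and axioms (4), (5), (6) are items (1), (2), (3) of Proposition \ref{propF}, respectively. My plan is simply to state each axiom of Definition \ref{gyro factor system} and cite the corresponding computation above.

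For the converse, given a group-gyro factor system $(K,H,\sigma,f,F)$, I would take $G=H\times K$ as a set and define
\[
(h,x)\c(k,y) \;=\; \bigl(h\,\sigma_x(k)\,f(x,y),\; xy\bigr),
\]
together with candidate gyroautomorphisms
\[
\gyr[(h,x),(k,y)]\bigl((l,z)\bigr) \;=\; \bigl(F_{(x,y)}(l,z),\;\gyr[x,y](z)\bigr).
\]
Axioms (1) and (2) give that $(e,e)$ is a left identity, and a direct computation produces the left inverse $\bigl(f(x^{-1},x)^{-1}\sigma_{x^{-1}}(h^{-1}),\,x^{-1}\bigr)$ of $(h,x)$. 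One then identifies $H$ with $H\times\{e\}\hookrightarrow G$, takes $\beta\colon(h,x)\mapsto x$ as the quotient map, and $t\colon x\mapsto(e,x)$ as the distinguished section. Unwinding the definitions with this $t$ recovers $(\sigma,f,F)$ as $(\sigma^t,f^t,F^t)$, which gives the last assertion.

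The principal obstacle — and the only place where the full strength of the factor system is used — is left gyroassociativity
\[
(h,x)\c\bigl((k,y)\c(l,z)\bigr) \;=\; \bigl((h,x)\c(k,y)\bigr)\c\gyr[(h,x),(k,y)]\bigl((l,z)\bigr).
\]
Expanding both sides, the $K$-components agree because $x(yz)=(xy)\gyr[x,y](z)$ in $K$, while the $H$-components, after cancelling the common prefix $h\,\sigma_x(k)$ and using that $\sigma_x$ is a homomorphism, reduce exactly to axiom (3) of the factor system. The loop property $\gyr[(h,x)\c(k,y),(k,y)]=\gyr[(h,x),(k,y)]$ reduces to axiom (6); that each $\gyr[(h,x),(k,y)]$ is an automorphism of $(G,\c)$ reduces to axiom (5); and the normality of $H\times\{e\}$ together with the group-gyro condition $\gyr[(h,e),(k,y)]=I_G$ follow from axiom (4) applied to the first slot of $F$. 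With those four checks in place, $(G,\c)$ is a gyrogroup in which $H$ is normal with quotient $K$, completing the correspondence.
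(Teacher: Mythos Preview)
Your proposal is correct and follows essentially the same approach as the paper: the forward direction is the bookkeeping you describe, and for the converse the paper defines the very same product $(a,x)\cdot(b,y)=(a\sigma_x(b)f(x,y),xy)$ and gyromap $\gyr[(a,x),(b,y)](c,z)=(F_{(x,y)}(c,z),\gyr[x,y](z))$ on $G=H\times K$, with section $t(x)=(e,x)$. The paper's proof is in fact terser than yours---it simply asserts that $G$ is a gyrogroup with $H$ normal---so your explicit reduction of each gyrogroup axiom to the corresponding factor-system axiom is a welcome expansion rather than a deviation.
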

\begin{proof}
 The proof of the direct part follows from the discussion which motivated the Definition \ref{gyro factor system} and for the converse part, let $G=H\times K.$ Define a product on $G$ by 
 $$(a,x)\cdot (b,y)= (a\sigma_x(b)f(x,y),xy)$$
 and gyro morphism by
 $$\gyr[(a,x),(b,y)](c,z)=(F_{(x,y)}(c,z),\gyr[x,y](z)).$$
 Then $G$ is a gyrogroup and $H$ is a normal subgroup of $G.$ By defining section $t$ by $t(x)=(e,x)$, we have $\sigma^t = \sigma, f^t = f$ and $F^t = F$.
\end{proof}

	\textbf{Equivalence between Category GEXT of group-gyro extension and Category GFAC of group-gyro factor systems-}
 Let  $(\lambda, \mu, \nu)$ be a morphism from the group-gyro extension $E_1$ to the group-gyro extension $E_2$. Then we have the following commutative diagram: 
\[
\xymatrix{
E_1 \equiv \{e\}\ar[r] & H_1 \ar[r]^{i_1} \ar[d]^{\lambda} & G_1 \ar[r]^{\beta_1} \ar[d]^{\mu} & K_1 \ar[r] \ar[d]^{\nu} & \{e\} \\
  E_2 \equiv \{e\}\ar[r] & H_2 \ar[r]^{i_2} & G_2 \ar[r]^{\beta_2} & K_2 \ar[r] & \{e\}
}
\]
Let $t_1$ and $t_2$ be sections of $E_1$ and $E_2$, respectively. Consider the corresponding group-gyro factor systems $(K_1,H_1,f^{t_1},\sigma^{t_1},F^{t_1})$ and $(K_2,H_2,f^{t_2},\sigma^{t_2},F^{t_2})$ of $E_1$ and $E_2$, respectively. Then for $x \in K_1$, $\mu(t_1(x))\in G_2$ and $\beta_2(\mu(t_1(x)))$ = $\nu(\beta_1(t_1(x)))$ = $\nu(x)$ = $\beta_2(t_2(\nu(x)))$. Thus $t_2(\nu(x))\mu(t_1(x))^{-1}$ $ \in  H_2$. In turn, we have a unique $g(x)$ $\in H_2$ such that 
\begin{align*}
g(x)^{-1}= t_2(\nu(x))\mu(t_1(x))^{-1}\\
\Rightarrow g(x)(t_2(\nu(x))\mu(t_1(x))^{-1})=e\ \text{(by left cancellation)}\\
\Rightarrow (g(x)t_2(\nu(x)))\gyr[g(x),t_2(\nu(x))](\mu(t_1(x))^{-1})=e\\
\Rightarrow (g(x)t_2(\nu(x)))\mu(t_1(x))^{-1}=e\\
\Rightarrow \mu(t_1(x))^{-1}(g(x)t_2(\nu(x)))=e.\\
\end{align*}
This implies 
\begin{equation}\label{7}
\mu(t_1(x))=  g(x)t_2(\nu(x)).
\end{equation} 
Since $t_1(e)=e$ = $t_2(e)$,  it follows that
\begin{equation}\label{8}
g(e)=e.
\end{equation}
Since $\mu(t_1(x)t_1(y))=\mu(t_1(x))\mu(t_1(y))$, we have the following equation
\begin{equation}\label{9}
\lambda(f^{t_1}(x,y))g(xy) = g(x)\sigma_{\nu(x)}^{t_2}(g(y))f^{t_2}(\nu(x),\nu(y)).
\end{equation}
Also,
\begin{equation}\label{10}
\mu((t_1(x)h)t_1(x)^{-1})=\lambda(\sigma_x^{t_1}(h)).
\end{equation}
By using the fact that $\mu$ is a gyrogroup homomorphism and Equation \ref{7},
\begin{align*}
\mu((t_1(x)h)t_1(x)^{-1})=\mu(t_1(x))\lambda(h)\mu(t_1(x)^{-1})\\
=(g(x)t_2(\nu(x)))\lambda(h)(t_2(\nu(x))^{-1}g(x)^{-1})\\
=g(x)\sigma_{\nu(x)}^{t_2}(\lambda(h))g(x)^{-1}
\end{align*}
Finally we have
\begin{equation}\label{11}
\mu((t_1(x)h)t_1(x)^{-1})=g(x)\sigma_{\nu(x)}^{t_2}(\lambda(h))g(x)^{-1}.
\end{equation}
On comparing Equation (\ref{10}) and (\ref{11}), we have
\begin{equation}\label{12}
\lambda(\sigma_x ^{t_1}(h))=g(x)\sigma_{\nu(x)}^{t_2}(\lambda(h))g(x)^{-1}.
\end{equation}
We know that,
$$\gyr[ht_1(x),kt_1(y)](lt_1(z))=F^{t_1}_{(x,y)}(l,z)t_1(\gyr[x,y](z))$$
\begin{align*}
\mu(\gyr[ht_1(x),kt_1(y)](lt_1(z)))=\gyr[\lambda(h)\mu(t_1(x)),\lambda(k)\mu(t_1(y))](\lambda(l)\mu(t_1(z)))\\
=\gyr[\lambda(h)g(x)t_2(\nu(x)),\lambda(k)g(y)t_2(\nu(y))](\lambda(l)g(z)t_2(\nu(z)))\\
=\gyr[(\lambda(h)g(x))t_2(\nu(x)),(\lambda(k)g(y))t_2(\nu(y))]((\lambda(l)g(z))t_2(\nu(z)))\\
=F^{t_2}_{(\nu(x),\nu(y))}(\lambda(l)g(z)),\nu(z))t_2(\gyr[\nu(x),\nu(y)](\nu(z))).
\end{align*}
We have the following equation
\begin{equation}\label{13}
\mu(\gyr[ht_1(x),kt_1(y)](lt_1(z)))=F^{t_2}_{(\nu(x),\nu(y))}(\lambda(l)g(z)),\nu(z))t_2(\gyr[\nu(x),\nu(y)](\nu(z))).
\end{equation} 
Also we have,
\begin{align*}
\mu(\gyr[ht_1(x),kt_1(y)](lt_1(z)))=\lambda(F^{t_1}_{(x,y)}(l,z))\mu(t_1(\gyr[x,y](z)))\\
=\lambda(F^{t_1}_{(x,y)}(l,z))g(gyr[x,y](z))t_2(\nu(\gyr[x,y](z)).
\end{align*}
Finally, we have
\begin{equation}\label{14}
\mu(\gyr[ht_1(x),kt_1(y)](lt_1(z)))=\lambda(F^{t_1}_{(x,y)}(l,z))g(gyr[x,y]z)t_2(\gyr[\nu(x),\nu(y)](\nu(z))).
\end{equation}
On comparing Equation (\ref{13}) and (\ref{14}),
\begin{equation}\label{15}
F^{t_2}_{(\nu(x),\nu(y))}(\lambda(l)g(z),\nu(z))=\lambda(F^{t_1}_{(x,y)}(l,z))g(gyr[x,y](z)).
\end{equation}
Thus a morphism $(\lambda,\mu,\nu)$ between two group-gyro extensions $E_1$ and $E_2$ together with choices of sections $t_1$ and $t_2$ of the corresponding extensions, induces a map $g$ from $K_1$ to $H_2$  such that the triple $(\nu, g, \lambda)$ satisfies equations (\ref{8}), (\ref{9}), (\ref{12}) and (\ref{15}). It can be seen as a morphism from the factor system  $(K_1,H_1,f^{t_1},\sigma^{t_1},F^{t_1})$ to $(K_2,H_2,f^{t_2},\sigma^{t_2},F^{t_2})$.

Let $(\lambda_1,\mu_1,\nu_1)$ be a morphism from the group-gyro extension
\[
\xymatrix{
E_1 \equiv \{e\}\ar[r] & H_1\ar[r]^{i_1} & G_1 \ar[r]^{\beta_1} & K_1\ar[r] & \{e\}
}
\]
to the group-gyro extension
\[
\xymatrix{
E_2 \equiv \{e\}\ar[r] & H_2\ar[r]^{i_2} & G_2 \ar[r]^{\beta_2} & K_2\ar[r] & \{e\}
}
\]
and $(\lambda_2,\mu_2,\nu_2)$ be another morphism from the group-gyro extension $E_2$ to the group-gyro extension $E_3$
\[
\xymatrix{
E_3 \equiv \{e\}\ar[r] & H_3\ar[r]^{i_3} & G_3 \ar[r]^{\beta_3} & K_3\ar[r] & \{e\}
}.
\]
Let $t_1, ~ t_2,~\text{and} ~ t_3$ be corresponding choices of sections. Then
$\mu_1(t_1(x))=g_1(x)t_2(\nu_1(x))$ and $\mu_2(t_2(x))=g_2(x)t_3(\nu_2(x))$, where $g_1:K_1 \longrightarrow H_2$ and $g_2:K_2 \longrightarrow H_3$ are uniquely determined maps same as $g$ in Equation $(\ref{7})$.
In turn, we have 
\begin{align*}
\mu_2(\mu_1(t_1(x)))&=\lambda_2(g_1(x))\mu_2(t_2(\nu_1(x)))\\
&=\lambda_2(g_1(x)))g_2(\nu_1(x))t_3(\nu_2(\nu_1(x)))\\
 &= \lambda_2 (g_1(x))g_2(\nu_1(x))t_3(\nu_2(\nu_1(x)))\\
 &=g_3(x)t_3(\nu_2(\nu_1(x))),\\
  \text{where}\ g_3(x) = \lambda_2 (g_1(x))g_2(\nu_1(x)),\ \text{for each}\ x \in K_1.
\end{align*}
Thus the composition $(\lambda_2 \circ \lambda_1, \mu_2 \circ \mu_1,\nu_2 \circ \nu_1)$ induces the triple $(\nu_2 \circ \nu_1,g_3, \lambda_2 \circ \lambda_1)$.

Now we introduce the category GFAC  whose objects are group-gyro factor system, and a morphism from $(K_1,H_1,f^{1},\sigma^1,F^1)$ to $(K_2,H_2,f^{2},\sigma^2,F^2)$ is a triple $(\nu, g, \lambda)$, where $\nu : K_1 \longrightarrow K_2$ is gyrogroup  homomorphism, $\lambda: H_1 \longrightarrow H_2$ is group  homomorphism, and $g: K_1 \longrightarrow H_2$ is a map such that 
\begin{enumerate}
\item $g(e) = e$;
\item $\lambda(f^{1}(x,y))g(xy)$ = $g(x)\sigma_{\nu(x)}^{2}(g(y))f^{2}(\nu(x),\nu(y))$;
\item $\lambda(\sigma_x ^{1}(h))=g(x)\sigma_{\nu(x)}^{2}(\lambda(h))g(x)^{-1}$;
\item $F^{2}_{(\nu(x),\nu(y))}(\lambda(l)g(z),\nu(z))=\lambda(F^{1}_{(x,y)}(l,z))g(gyr[x,y](z))$.

\end{enumerate}
The composition of morphisms $(\nu_1, g_1, \lambda_1)$ from $(K_1,H_1,f^{1},\sigma^1,F^1)$ to $(K_2,H_2,f^{2},\sigma^2,F^2)$ and $(\nu_2, g_2, \lambda_2)$ from $(K_2,H_2,f^{2},\sigma^2,F^2)$ to $(K_3,H_3,f^{3},\sigma^3,F^3)$ is the triple $(\nu_2 ~ o~ \nu_1, g_3, \lambda_2 ~ o~ \lambda_1)$, where $g_3$ is given by $g_3 (x) =  \lambda_2 (g_1(x))g_2(\nu_1(x))$
for each $x \in K_1$.

So, finally from the above discussion, we have the following Theorem:
\begin{theorem} \label{equivalence}

There is an equivalence between the category $\mathbf{GEXT}$ of group-gyro extensions to the category $\mathbf{GFAC}$ of group-gyro factor systems.
\end{theorem}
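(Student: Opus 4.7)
The plan is to make the content of the paragraphs preceding the theorem functorial by constructing an explicit pair of functors $\Phi\colon\mathbf{GEXT}\to\mathbf{GFAC}$ and $\Psi\colon\mathbf{GFAC}\to\mathbf{GEXT}$ and then exhibiting natural isomorphisms $\Phi\Psi\cong \mathrm{Id}_{\mathbf{GFAC}}$ and $\Psi\Phi\cong \mathrm{Id}_{\mathbf{GEXT}}$. Almost all the computations that appear in the discussion before the theorem have been packaged in exactly the shape needed for this; what remains is to organize them.

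To build $\Phi$, I would first choose, for every object $E$ of $\mathbf{GEXT}$, a section $t_E$ once and for all. Setting $\Phi(E)=(K,H,\sigma^{t_E},f^{t_E},F^{t_E})$ makes sense by the previous proposition. On a morphism $(\lambda,\mu,\nu)\colon E_1\to E_2$, the unique $g\colon K_1\to H_2$ determined by $\mu(t_1(x))=g(x)t_2(\nu(x))$ gives $\Phi(\lambda,\mu,\nu)=(\nu,g,\lambda)$; the four conditions in the definition of a morphism of $\mathbf{GFAC}$ are precisely equations (\ref{8}), (\ref{9}), (\ref{12}) and (\ref{15}), which were verified above. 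Functoriality on composition is the computation $g_3(x)=\lambda_2(g_1(x))g_2(\nu_1(x))$ already performed, and on identities it is immediate because the identity morphism of $E$ with the chosen section gives the constant map $g\equiv e$.

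To build $\Psi$, on objects I would use the construction of the earlier proposition: for a factor system $(K,H,\sigma,f,F)$ take $\Psi(K,H,\sigma,f,F)$ to be the extension with total space $H\times K$, product $(a,x)(b,y)=(a\sigma_x(b)f(x,y),xy)$, gyroautomorphism $\gyr[(a,x),(b,y)](c,z)=(F_{(x,y)}(c,z),\gyr[x,y](z))$, and canonical section $t(x)=(e,x)$. On a morphism $(\nu,g,\lambda)$ I would define $\mu\colon H_1\times K_1\to H_2\times K_2$ by $\mu(h,x)=(\lambda(h)g(x),\nu(x))$. Commutativity of the two squares of the extension diagram is clear; the nontrivial check is that $\mu$ preserves the binary operation and gyroautomorphism. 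For the product, expanding $\mu((a,x)(b,y))$ and $\mu(a,x)\mu(b,y)$ and using condition (3) of a $\mathbf{GFAC}$ morphism to rewrite $\sigma^2_{\nu(x)}(\lambda(b))=g(x)^{-1}\lambda(\sigma^1_x(b))g(x)$, then condition (2) to absorb the remaining $g(x)\sigma^2_{\nu(x)}(g(y))f^2(\nu(x),\nu(y))$ into $\lambda(f^1(x,y))g(xy)$, collapses both sides to $(\lambda(a\sigma^1_x(b)f^1(x,y))g(xy),\nu(xy))$. The analogous calculation for the gyroautomorphism uses condition (4); functoriality of $\Psi$ is a direct bookkeeping check using the composition rule for $\mathbf{GFAC}$.

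Finally, for the two natural isomorphisms: $\Phi\Psi$ is in fact the identity on objects, because the canonical section $t(x)=(e,x)$ on $\Psi(K,H,\sigma,f,F)$ reproduces exactly $\sigma^t=\sigma$, $f^t=f$, $F^t=F$ by definition, and is the identity on morphisms by inspection. For $\Psi\Phi$, the isomorphism $\eta_E\colon E\to\Psi\Phi(E)$ is given on each extension $E$ with chosen section $t_E$ by the unique decomposition $g=h\cdot t_E(x)$ available in a group-gyro extension (noted after the definition of group-gyro extension), sending $g\mapsto(h,x)$; this is a gyrogroup isomorphism commuting with the diagram, and naturality in $(\lambda,\mu,\nu)$ is the identity $\mu(h\cdot t_1(x))=\lambda(h)\cdot g(x)\cdot t_2(\nu(x))$, which is just equation (\ref{7}) rewritten. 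The main obstacle I expect is the verification that $\mu$ in the definition of $\Psi$ really is a gyrogroup homomorphism (both on the product and on the gyroautomorphism); everything else is either already done in the text before the theorem or a routine rewriting of the $\mathbf{GFAC}$-morphism conditions.
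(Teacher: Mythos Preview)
Your proposal is correct and follows the same approach as the paper: the paper simply declares the theorem to be a consequence of ``the above discussion'' and gives no further argument, so its proof consists entirely of the preceding proposition (objects both ways) and the morphism computation (extensions to factor systems, including the composition formula $g_3(x)=\lambda_2(g_1(x))g_2(\nu_1(x))$). Your write-up is strictly more complete than what the paper provides---you explicitly build the inverse functor $\Psi$ on morphisms, check that $\mu(h,x)=(\lambda(h)g(x),\nu(x))$ is a gyrogroup homomorphism, and supply both natural isomorphisms---none of which the paper spells out.
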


	\textbf{Dependency of an extension on sections-} Let $s$ and $t$ be two sections of an extension \[\xymatrix{
E \equiv \{e\}\ar[r] & H\ar[r]^{i} & G \ar[r]^{\beta} & K\ar[r] & \{e\}
}.
\] Then there exists an identity preserving map $ g : K \rightarrow H$ such that $s(x) = g(x)t(x)$ (see Equation (\ref{7})). Hence by taking $\lambda = I_H$, $\mu = I_G$ and $\nu = I_K$ in equations (\ref{9}), (\ref{12}), (\ref{15}), we have the following identities:
\begin{enumerate}
\item $f^{s}(x,y)g(xy)$ = $g(x)\sigma_{x}^{t}(g(y))f^{t}(x,y)$.
\item $\sigma_x ^{s}(h))=g(x)\sigma_{x}^{t}(h)g(x)^{-1}$.
\item $F^{t}_{(x,y)}(lg(z),z)=F^{s}_{(x,y)}(l,z))g(gyr[x,y](z))$.
\end{enumerate}

\section{Semi cross product and examples of gyrogroups}
In this section, we define semi cross product of a group and a gyrogroup. By using  semi cross product, we construct few examples of gyrogroups of finite and infinite orders.
\begin{proposition}\label{semi}
Let $H$ be a group and $K$ be a gyrogroup. Let $\sigma: K \longrightarrow Aut(H)$ be a map such that
\begin{enumerate}
\item $\sigma_1 = I_H$, where $I_H$ denotes the identity map on $H$;
\item $\sigma_{x^{-1}} = \sigma^{-1}_x$;
\item $\sigma_{((xy)y)^{-1}}\circ \sigma_{xy} = \sigma_{(xy)^{-1}}\circ \sigma_x$.
\end{enumerate}
for all $x, y \in K$, where $\sigma_x$ denotes the image of $x$ under $\sigma$. Then $H \times K$ is a gyrogroup  with the binary operation $*$
$$(h, x)*(k, y) = (h\sigma_x (k), xy)$$ and the gyroautomorphism $$\gyr[(h, x), (k, y)] (l, z) = ((\sigma_{(xy)^{-1}}\circ \sigma_x\circ \sigma_y)(l), \gyr[x, y](z)).$$ We call it as a semi cross product of $H$ and $K$, and we denote it by $H\Join K$.
\end{proposition}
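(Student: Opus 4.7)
The plan is to verify the four axioms of Definition \ref{gyrogroup} directly for $(H\times K, *)$ with the prescribed gyromap, calling on each of the three hypotheses on $\sigma$ at the exact place it is needed. The left identity axiom is a one-liner using hypothesis (1): $(e_H, e_K)*(h,x) = (e_H\,\sigma_{e_K}(h), e_K x) = (h,x)$. For the left inverse I would guess the candidate $(h,x)^{-1} := (\sigma_{x^{-1}}(h^{-1}), x^{-1})$ and verify
\[
(\sigma_{x^{-1}}(h^{-1}), x^{-1}) * (h, x) = (\sigma_{x^{-1}}(h^{-1})\sigma_{x^{-1}}(h), x^{-1}x) = (\sigma_{x^{-1}}(h^{-1}h), e_K) = (e_H, e_K),
\]
which uses only that $\sigma_{x^{-1}}\in \mathrm{Aut}(H)$.

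The substantive step is the left gyroassociative law
\[
(h,x)*\bigl((k,y)*(l,z)\bigr) = \bigl((h,x)*(k,y)\bigr)*\gyr[(h,x),(k,y)](l,z).
\]
The $K$-coordinates of the two sides coincide by the left gyroassociative law $x(yz) = (xy)\gyr[x,y](z)$ in $K$. Expanding the $H$-coordinate, the left side becomes $h\sigma_x(k)(\sigma_x\c\sigma_y)(l)$ while the right side becomes $h\sigma_x(k)(\sigma_{xy}\c\sigma_{(xy)^{-1}}\c\sigma_x\c\sigma_y)(l)$; these agree because hypothesis (2) forces $\sigma_{xy}\c\sigma_{(xy)^{-1}} = I_H$. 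For the loop property $\gyr[(h,x)*(k,y),(k,y)] = \gyr[(h,x),(k,y)]$, the $K$-coordinate matches by the loop property $\gyr[xy,y] = \gyr[x,y]$ in $K$, and the $H$-coordinate reduces, after cancelling $\sigma_y$ from the right, to exactly the relation $\sigma_{((xy)y)^{-1}}\c\sigma_{xy} = \sigma_{(xy)^{-1}}\c\sigma_x$ of hypothesis (3). Thus each of the three hypotheses plays a distinct, clearly identifiable role.

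What remains is to confirm that $\gyr[(h,x),(k,y)]$ is a genuine automorphism of $(H\times K,*)$, not merely a bijection. Its two coordinates are automorphisms of $H$ and of $K$ respectively, and the identity is preserved; the live question is compatibility with $*$ when applied to $(l,z)*(m,w)$. Tracking both sides, the $K$-component is fine because $\gyr[x,y]\in\mathrm{Aut}(K)$; the $H$-component reduces to the identity
\[
\sigma_{(xy)^{-1}}\c\sigma_x\c\sigma_y\c\sigma_z = \sigma_{\gyr[x,y](z)}\c\sigma_{(xy)^{-1}}\c\sigma_x\c\sigma_y.
\]
I expect this to be the main obstacle, since it is not literally one of the three stated hypotheses. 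My plan for handling it is to try to extract it as a consequence of (2) and (3) applied at various triples (for instance feeding $x,y,\gyr[x,y](z)$ into hypothesis (3), and using $x(yz) = (xy)\gyr[x,y](z)$) and, failing that, to note that this is precisely the specialisation of Definition \ref{gyro factor system}(5) required by Proposition 3.7 in the case $f\equiv e$, so that the semi cross product can equivalently be realised as the group-gyro extension associated to the factor system $(K,H,\sigma,e,F)$. Once that identity is secured, all four axioms of Definition \ref{gyrogroup} hold and $H\Join K$ is a gyrogroup.
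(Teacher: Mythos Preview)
Your verification of the four gyrogroup axioms --- left identity via hypothesis (1), the inverse candidate $(\sigma_{x^{-1}}(h^{-1}),x^{-1})$, the gyroassociative law via hypothesis (2), and the left loop property via hypothesis (3) --- is line-for-line the same argument the paper gives, so on that front your proposal matches the paper exactly.

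Where you diverge is that you go further: you correctly observe that Definition~\ref{gyrogroup}(3) requires $\gyr[(h,x),(k,y)]$ to be an \emph{automorphism} of $(H\times K,*)$, and you isolate the needed identity
\[
\sigma_{(xy)^{-1}}\c\sigma_x\c\sigma_y\c\sigma_z \;=\; \sigma_{\gyr[x,y](z)}\c\sigma_{(xy)^{-1}}\c\sigma_x\c\sigma_y.
\]
The paper's proof simply does not address this point at all --- it checks identity, inverse, gyroassociativity, and the loop property, and stops. So you are being more careful than the paper, not less. Your instinct that this identity is not an obvious formal consequence of (1)--(3) alone is sound; note that every concrete example the paper works out (Remark~\ref{imp}(2) and Examples~\ref{example1} onward) has $\mathrm{Aut}(H)$ abelian of exponent~$2$, which trivialises much of the issue. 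Your fallback of recognising it as the $f\equiv e$ specialisation of condition~(5) in Definition~\ref{gyro factor system} is a reasonable way to frame what is being asked, though that route presupposes the condition rather than deriving it.
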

\begin{proof}
\begin{enumerate}
\item \textbf{Identity:} $(e, e)$ is the identity element. 
\item \textbf{Inverse:} Let $(h, x) \in H \times K$. Then $(\sigma_{x^{-1}} (h^{-1}), x^{-1})$ is the inverse.
\item \textbf{Left gyroassociative law:} Let $(h, x), (k, y), (l, z) \in H \times K$. Then 

$(h, x)* ((k, y)*(l, z)) = (h, x)*(k\sigma_y (l), yz) = (h \sigma_x(k\sigma_y (l)), x(yz))$ 

On the other hand, 

$((h, x)* (k, y))*\gyr[(h, x), (k, y)] (l, z) = (h \sigma_x(k), xy)*((\sigma_{(xy)^{-1}}\circ \sigma_x\circ \sigma_y)(l), \\\gyr[x, y](z)) = (h \sigma_x(k) (\sigma_{(xy)}\circ\sigma_{(xy)^{-1}}\circ \sigma_x\circ \sigma_y)(l), (xy)\gyr[x, y](z))= (h \sigma_x(k\sigma_y (l)), x(yz))$.

Therefore, $(h, x)* ((k, y)*(l, z))= ((h, x)* (k, y))*\gyr[(h, x), (k, y)] (l, z)$.

\item \textbf{Left loop property:} $\gyr[(h, x)*(k, y), (k, y)] = \gyr[(h \sigma_x(k), xy), (k, y)] = (\sigma_{((xy)y)^{-1}}\circ \sigma_{xy}\circ \sigma_{y}, \gyr[xy, y]) = (\sigma_{(xy)^{-1}}\circ \sigma_x\circ \sigma_{y}, \gyr[x, y])$ (by the loop identity in $K$).
\end{enumerate}
\end{proof}

\begin{definition}
An extension \[
\xymatrix{
E \equiv \{e\}\ar[r] & H\ar[r]^{i} & G \ar[r]^{\beta} & K\ar[r] & \{e\}
}
\] of $H$ by $K$ is called an split extension if there is a section $t$ which is a gyrogroup homomorphism. Such a section $t$ is called a splitting of the extension. The corresponding factor system $(K,H,f^{t},\sigma^t,F^t)$ is such that $f^{t}$ is trivial in the sense that $f^t(x, y) = e$, for all $x, y \in K$.
\end{definition}
\begin{remark}\label{imp}
\begin{enumerate}
\item If $K$ is a group and $\sigma$ is a group homomorphism, then the semi cross product is the semi direct product of $H$ and $K$.
\item Suppose $Aut(H)$ is an abelian group such that every element is its own inverse. Let $\sigma: K \longrightarrow Aut(H)$ be a map which satisfies the first two conditions of Proposition \ref{semi}. Then the third condition becomes $\sigma_{((xy)y)} = \sigma_x$, for all $x, y \in K$. Moreover, if $K$ is a group, then  the third condition becomes $\sigma_{xy^2} = \sigma_x$, for all $x, y \in K$.
\item Suppose $K = \{0, 1, 2, 3, 4, 5, 6, 7\}$ is the gyrogroup given in Example 3.2 in \cite{BKS}. The table for the element $(xy)y$ is as follows:\\

\begin{tabular}{ c  |c  c  c c c c c c}
        \hline
             &0 & 1 & 2 & 3& 4& 5& 6& 7\\\hline
            0 & 0 & 0 & 0 & 0& 0& 0& 0& 0\\ 
            1 & 1 & 1 & 6 & 1& 1& 6& 1& 1\\
            2 & 2 & 5 & 1 & 2& 2& 2& 5& 2\\
            3 & 3 & 4 & 4 & 3& 3& 4& 4& 3\\
            4 & 4 & 3 & 3 & 4& 4& 3& 3& 4\\
            5 & 5 & 2 & 5 & 5& 5& 5&2& 5\\
            6 & 6 & 6 & 1 & 6& 6& 1& 6& 6\\
            7 & 7 & 7 & 7 & 3& 7& 7& 7& 7\\           
        \hline
        \end{tabular}\\
        
The first column and the first row stand for the values of $x$ and $y$, respectively.

\item Suppose $K$ is  the quaternion group $Q_8 = \{1, -1, i, -i, j, -j, k, -k\}$. The table for the element $(xy)y= xy^2$ is as follows:\\

\begin{tabular}{ c  |c  c  c c c c c c}
        \hline
             &1 & -1 & i & -i& j& -j& k& -k\\\hline
           1 & 1 & 1 & -1 & -1& -1 & -1& -1 & -1\\ 
            -1 & -1 & -1 & 1 & 1& 1 & 1& 1 & 1\\
            i & i & i & -i & -i& -i & -i& -i & -i\\
            -i & -i & -i & i & i& i & i& i & i\\
            j & j & j & -j & -j& -j & -j& -j & -j\\
            -j & -j & -j & j & j& j & j& j & j\\
            k & k & k & -k & -k& -k & -k& -k & -k\\
            -k & -k & -k & k & k& k & k& k & k\\           
        \hline
        \end{tabular}\\

The first column and the first row stand for the values of $x$ and $y$, respectively.
\end{enumerate}

\end{remark}

By Proposition \ref{semi}, one can see the following results analogous to group extension.
\begin{theorem} Let $H$ be a group and $K$ be a gyrogroup. Then
$G= H\Join K$ if and only if the following conditions  hold:
\begin{enumerate}
\item $H \cong  \{(h, e)|h \in H\}$ and $K \cong  \{(e, k)|k \in K\}$;
\item $G = HK$;
\item $\gyr[(h, e), (h', k')]  = I_G$;
\item $((h_1, x)(h_2, e))(h_1, x)^{-1}\in H$.
\end{enumerate}
\end{theorem}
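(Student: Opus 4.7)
The plan is to prove both directions by direct computation, using condition (3) to collapse every gyration involving an $H$-element (combined with Proposition \ref{gyro identity}(5), this also gives $\gyr[g,h]=I_G$ for $h\in H$, $g\in G$).

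For the forward direction, identify $H$ with $\{(h,e)\}$ and $K$ with $\{(e,k)\}$ inside $H\Join K$. Each condition reduces to a one-line check via the formulas of Proposition \ref{semi}: $(h,e)(h',e)=(hh',e)$ and $(e,k)(e,k')=(e,kk')$ confirm the subgroup/subgyrogroup structure of (1); $(h,e)(e,k)=(h,k)$ gives (2); the gyroautomorphism formula at $(h,e),(h',k')$ collapses using $\sigma_e=I_H$, $\sigma_{k'^{-1}}\sigma_{k'}=I_H$, and $\gyr[e,k']=I$ to verify (3); and a direct expansion gives $((h_1,x)(h_2,e))(h_1,x)^{-1}=(h_1\sigma_x(h_2)h_1^{-1},e)\in H$, which is (4).

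For the converse, I would proceed in four steps. First, I would establish uniqueness of the decomposition $g=hk$ with $h\in H, k\in K$: from $h_1k_1=h_2k_2$, left-cancel by $h_2^{-1}$ and use gyration-triviality to get $(h_2^{-1}h_1)k_1=k_2$; then the identity $(hk)k^{-1}=h$ (valid because $\gyr[h,k]=I$) forces $h_2^{-1}h_1=k_2k_1^{-1}\in H\cap K=\{e\}$. Second, define $\sigma_x(h)=(xh)x^{-1}$, which lies in $H$ by condition (4) with $h_1=e$; iterated left gyroassociativity (each gyration trivial because an $H$-element appears) shows both $\sigma_x(hh')=\sigma_x(h)\sigma_x(h')$ and $\sigma_{x^{-1}}\circ\sigma_x=I_H$, so $\sigma_x\in Aut(H)$. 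Third, verify the three hypotheses of Proposition \ref{semi}: the first two are immediate from the previous step, and for the third, $\sigma_{((xy)y)^{-1}}\circ\sigma_{xy}=\sigma_{(xy)^{-1}}\circ\sigma_x$, note that with trivial $f$ the $H$-component of $\gyr[hx,ky](lz)$ equals $(\sigma_{(xy)^{-1}}\sigma_x\sigma_y)(l)$ (the $f\equiv e$ reduction of Equation \ref{6}); applying the loop identity $\gyr[(hx)(ky),ky]=\gyr[hx,ky]$ in $G$ and canceling $\sigma_y$ yields the equation.

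Fourth, the map $\phi:H\Join K\to G$ given by $\phi(h,x)=hx$ is a bijection by Step 1. To show it is multiplicative, first derive $xk=\sigma_x(k)\cdot x$ (since $\sigma_x(k)\cdot x=((xk)x^{-1})x=xk$, using $\gyr[xk,x^{-1}]=\gyr[x,k]=I$ via Proposition \ref{gyro identity}(3) together with condition (3)); then expand $(hx)(ky)$ by successive left gyroassociativities---each gyration trivial because an $H$-element is present---to obtain $(h\sigma_x(k))(xy)$, matching the semi-cross-product formula. Since gyroautomorphisms are determined by the binary operation through the gyrator identity (Proposition \ref{gyro identity}(4)), $\phi$ is automatically a gyrogroup isomorphism. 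The principal obstacle is Step 3, where the loop identity in $K$ must be combined carefully with the gyration formula from $G$; the remaining steps are routine once gyration-triviality in the presence of an $H$-element is used systematically.
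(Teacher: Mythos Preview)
Your proposal is correct and in fact supplies considerably more detail than the paper does: the paper gives no proof at all beyond the remark ``By Proposition~\ref{semi}, one can see the following results analogous to group extension,'' so your argument is a legitimate fleshing-out of what the authors leave implicit. Your forward direction is exactly the one-line verification the paper presumably has in mind, and your converse---defining $\sigma_x(h)=(xh)x^{-1}$, checking the three hypotheses of Proposition~\ref{semi}, and exhibiting $(h,x)\mapsto hx$ as an isomorphism---is the natural route.

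One small remark on presentation: in Step~3 you appeal to the $f\equiv e$ reduction of Equation~\ref{6} from Section~3, whereas the paper's hint points only to Proposition~\ref{semi}. These are the same computation (the gyroautomorphism formula $\gyr[(h,x),(k,y)](l,z)=((\sigma_{(xy)^{-1}}\sigma_x\sigma_y)(l),\gyr[x,y](z))$ in Proposition~\ref{semi} \emph{is} the trivial-$f$ case of $F^t$), so there is no mathematical difference, but you could streamline by citing Proposition~\ref{semi} directly and then reading off the loop-property consequence in the first coordinate. Also, in Step~1 you use $H\cap K=\{e\}$ without comment; this is automatic under the paper's identification $H\leftrightarrow\{(h,e)\}$, $K\leftrightarrow\{(e,k)\}$, but is worth stating since the theorem's formulation blurs the line between abstract $G$ and the concrete product set.
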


\begin{theorem}
Let $H$ be a group, $K$ be a gyrogroup and \[
\xymatrix{
E \equiv 1\ar[r] & H\ar[r]^{i} & G \ar[r]^{\beta} & K\ar[r] & 1
}
\] of $H$ by $K$ be an split extension. Then $G  \cong H\Join K$. Conversely, if $G  = H\Join K$, then there is a natural projection $p$ from $G$ to $K$ such that \[
\xymatrix{
E \equiv \{e\}\ar[r] & H\ar[r]^{i} & G \ar[r]^{\beta} & K\ar[r] & \{e\}
}
\] is a split extension of $H$ by $K$.
\end{theorem}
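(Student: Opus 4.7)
The plan is to prove both directions via the factor-system framework of Section 3, exploiting the fact that a splitting forces the factor $f^t$ to be identically trivial and thereby collapses the general extension data onto the semi cross product structure.

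For the forward direction, I would let $t$ be a splitting of $E$. Since $t$ is a gyrogroup homomorphism, $t(xy) = t(x)t(y)$, so $f^t(x,y) = e$ for all $x,y \in K$. Substituting this triviality into equation (\ref{1'}) yields $\sigma^t_x(\sigma^t_y(l)) = \sigma^t_{xy}(F^t_{(x,y)}(l,z))$; since the left side is independent of $z$, so is $F^t_{(x,y)}(\cdot, z)$, and Remark \ref{trivial}(2) reduces to $\sigma^t_{x^{-1}} = (\sigma^t_x)^{-1}$. Combining these gives the closed form
\[
F^t_{(x,y)}(l,z) = \sigma^t_{(xy)^{-1}} \circ \sigma^t_x \circ \sigma^t_y(l).
\]
I would then verify that $\sigma := \sigma^t$ satisfies the three hypotheses of Proposition \ref{semi}: (1) is immediate from $t(e)=e$; (2) was just noted; and (3) follows from Proposition \ref{propF}(3), $F^t_{(xy,y)} = F^t_{(x,y)}$, by applying it to the closed form and cancelling the automorphism $\sigma^t_y$. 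Hence the semi cross product $H \Join K$ is defined, and I would set $\phi : H \Join K \to G$ by $\phi(h,x) = ht(x)$; this is a bijection by the unique representation of elements of $G$ established at the start of Section 3, equation (\ref{4}) with $f^t \equiv e$ gives multiplicativity, and equation (\ref{gyroF}) together with the closed form for $F^t$ shows $\phi$ intertwines the gyroautomorphisms, so $\phi$ is a gyrogroup isomorphism.

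For the converse, I would define $p : G = H \Join K \to K$ by $p(h,x) = x$. The binary operation and gyroautomorphism in Proposition \ref{semi} both act on the second coordinate precisely as the operation and gyration of $K$, so $p$ is a gyrogroup homomorphism with kernel $H \times \{e\} \cong H$, giving a short exact sequence. The section $t : K \to G$ defined by $t(x) = (e,x)$ satisfies $p \circ t = I_K$ and, using $\sigma_x(e) = e$ and condition (1) of Proposition \ref{semi}, is a gyrogroup homomorphism, hence a splitting. The only substantive step in the entire argument is the reduction above: once one observes that triviality of $f^t$ forces $F^t$ to be $z$-independent and expressible purely in terms of $\sigma^t$, the loop identity $F^t_{(xy,y)} = F^t_{(x,y)}$ becomes exactly condition (3) of Proposition \ref{semi}, and all remaining verifications are direct formula matching.
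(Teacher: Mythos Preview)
Your proposal is correct and follows essentially the same route as the paper: the paper's proof also notes that a splitting forces $f^t\equiv e$, then invokes Remark~\ref{trivial} and Proposition~\ref{propF} to obtain $\sigma_{x^{-1}}=\sigma_x^{-1}$ and $\sigma_{((xy)y)^{-1}}\circ\sigma_{xy}=\sigma_{(xy)^{-1}}\circ\sigma_x$, leaving the rest as ``easy to see.'' Your version simply fills in the details the paper omits, in particular deriving the closed form for $F^t_{(x,y)}$ (which one could alternatively read off directly from equation~(\ref{6}) with $f^t\equiv e$) and spelling out the isomorphism $\phi$ and the converse splitting.
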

\begin{proof}
Since $E$ is an split extension, we have a section $t$ which is gyrogroup homomorphism. This implies that $f^t(x, y) = e$, for all $x, y\in K$. Therefore, by Remark \ref{trivial} and Proposition \ref{propF}, we have $\sigma_{x^{-1}} = \sigma^{-1}_x$ and $\sigma_{((xy)y)^{-1}}\circ \sigma_{xy} = \sigma_{(xy)^{-1}}\circ \sigma_x$, for all $x, y \in K$. The remaining part of the proof is easy to see.
\end{proof}
\begin{example}\label{example1}\textbf{[Gyrogroups of order 24]}
\begin{enumerate}
\item Let $H= \mathbb{Z}_3 = \{\bar{0}, \bar{1}, \bar{2}\}$ be the cyclic group of order  $3$ and $K = \{0, 1, 2, 3, 4, 5, 6, 7\}$ be the gyrogroup given in Example 3.2 in \cite{BKS}. Then $\text{Aut}(H) = \{I_H, f\}$, where $f: H \longrightarrow H$ defined by 
$f(\bar{0}) = \bar{0}, f(\bar{1}) = \bar{2}, f(\bar{2}) = \bar{1}$.
\begin{enumerate}
\item If $\sigma : K \longrightarrow \text{Aut}(H)$ is the trivial map ($\sigma(x) = I_H$, for all $x$), then $G= H \Join K$ is a gyrogroup of order $24$ with the given binary operation and gyroautomorphisms:
$$(h, x)*(k, y) = (hk, xy)$$ and $$\gyr[(h, x), (k, y)] = (I_H, \gyr[x, y]).$$
\item Let $\sigma : K \longrightarrow \text{Aut}(H)$ be a map defined by 
\[ \sigma(x) = \begin{cases}
f,  ~\text{if}  ~ x = 7\\
I_H,  ~\text{otherwise}.
\end{cases}\]

Clearly, $\sigma(0) = I_H$ and $\sigma(x^{-1}) = \sigma^{-1}(x)$, for all $x$. By the definition of $K$, every element of $K$ is its own inverse. Since $f$ is its own inverse, $\sigma(x^{-1}) = \sigma(x)$. Hence, by Remark \ref{imp} (2) and (3), $\sigma_{((xy)y)} = \sigma_x ~ \forall~  x, y \in K$. Thus $G= H\Join K$ is a gyrogroup of order $24$. More precisely, the binary operation on $G$ and gyroautomorphisms are given below:
\[(h, x)*(k, y) = \begin{cases}
(hf(k), 7y),  ~\text{if}  ~ x = 7\\
(hk, xy),  ~\text{otherwise}
\end{cases} \]
 and 
 \[\gyr[(h, x), (k, y)] = \begin{cases}
(f, I),  ~\text{if}  ~ (x, y)\in X \\
(I_H, A),  ~(x, y)\in Y
\\
(I_H, I),  ~\text{otherwise}
\end{cases} \]
\begin{align*}
\text{where}~ &X = \{(1, 6), (6, 1), (2, 5), (5, 2), (3, 4), (4, 3)\} ~\text{and}\\
&Y = \{(1, 2), (1, 3), (1, 4), (1, 5), (2, 1), (2, 3), (2, 4), (2, 6), (3, 1), (3, 2), (3, 5), (3, 6), (4, 1), (4, 2),\\& (4, 5), (4, 6), (5, 1), (5, 3), (5, 4), (5, 6), (6, 2), (6, 3), (6, 4), (6, 5)\}.
\end{align*}

\end{enumerate}
\item Let $H= \mathbb{Z}_3 = \{\bar{0}, \bar{1}, \bar{2}\}$ be the cyclic group of order  $3$ and $K = Q_8$, where $Q_8$ is the quaternion group of order $8$. Then $\text{Aut}(H) = \{I_H, f\}$, where $f: H \longrightarrow H$ defined by 
$f(\bar{0}) = \bar{0}, f(\bar{1}) = \bar{2}, f(\bar{2}) = \bar{1}$. Define a map $\sigma : K \longrightarrow \text{Aut}(H)$ by 
\[ \sigma(x) = \begin{cases}
f,  ~\text{if}  ~ x = \pm i\\
I_H,  ~\text{otherwise}.
\end{cases}\]

Clearly, $\sigma(0) = I_H$ and $\sigma(x^{-1}) = \sigma^{-1}(x)$, for all $x$. Since $f$ is its own inverse, $\sigma(x^{-1}) = \sigma(x)$. Hence by Remark \ref{imp} (2) and (3), $\sigma_{xy^2} = \sigma_x ~ \forall~  x, y \in K$. Thus $G= H\Join K$ is a gyrogroup of order $24$. More precisely, the binary operation on $G$ and gyroautomorphisms are given below:
\[(h, x)*(k, y) = \begin{cases}
(hf(k), xy),  ~\text{if}  ~ x = \pm i\\
(hk, xy),  ~\text{otherwise}
\end{cases} \]
 and 
 \[\gyr[(h, x), (k, y)] = (\sigma_{xy} \circ \sigma_x\circ \sigma_y, I_K)= \begin{cases}
(f, I_K),  ~\text{if}  ~ (x, y)\in A \\
(I_H, I_K),  ~\text{otherwise}
\end{cases} \]
where $A$ is \begin{align*}
&\{(j, k), (j, -k), (-j, k), (-j, -k), (k, j), (k, -j), (-k, j), (-k, -j), (i, k), (i, -k), (-i, k), (-i, -k), \\&(i, j), (i, -j), (-i, j), (-i, -j), (j, k), (j, -k), (-j, k), (-j, -k), (k, j), (k, -j), (-k, j), (-k, -j)\}.
\end{align*}
\end{enumerate} 
\end{example}

\begin{example}\textbf{[Gyrogroups of order 32]}
Let $H= \mathbb{Z}_4 = \{\bar{0}, \bar{1}, \bar{2}, \bar{3}\}$ be the cyclic group of order  $4$ and $K = \{0, 1, 2, 3, 4, 5, 6, 7\}$ be the gyrogroup given in Example 3.2 in \cite{BKS}. Then $\text{Aut}(H) = \{I_H, f\}$, where $f: H \longrightarrow H$ defined by 
$f(\bar{0}) = \bar{0}, f(\bar{1}) = \bar{3}, f(\bar{2}) = \bar{2}, f(\bar{3}) = \bar{1}$. 

The maps $\sigma$ from $K$ to $\text{Aut}(H)$ given in Example \ref{example1} 1(a) and 1(b)  will give two gyrogroups of order 32. Also, the map $\sigma$ from $Q_8$ to $\text{Aut}(H)$ given in Example \ref{example1} (2)  will give another gyrogroup of order 32.
\end{example}
\begin{example}\textbf{[Gyrogroup of infinite order]}
Let $H= \mathbb{Z}$ be the group of integers and $K = \{0, 1, 2, 3, 4, 5, 6, 7\}$ be the gyrogroup given in Example 3.2 in \cite{BKS}. Then $\text{Aut}(H) = \{I_H, f\}$, where $f: H \longrightarrow H$ defined by 
$f(x) = -x ~\forall~ x \in H$. The maps $\sigma$ from $K$ to $\text{Aut}(H)$ given in Example \ref{example1} 1(a) and 1(b)  will give two gyrogroups of infinite order. Also, the map $\sigma$ from $Q_8$ to $\text{Aut}(H)$ given in Example \ref{example1} (2)  will give another gyrogroup of infinite order.
\end{example}

\textbf{Conclusion:} We have discussed the theory of extensions of a group by a gyrogroup which is the first question posed in the introduction. In continuation of this, we define semi cross product of a group and a gyrogroup, and constructed several examples of gyrogroups. Note that one can construct a gyrogroup  (which is not a group) with the help of two groups which is evident in the examples. The semi cross product may be helpful to classify finite gyrogroups upto isomorphism.  

{\bf Acknowledgment:} We are extremely thankful to Prof. Ramji Lal for his continuous support, discussion and encouragement. The first named author thanks IIIT Allahabad and Ministry of Education, Government of India for providing institute fellowship. The third named author is thankful to IIIT Allahabad for providing SEED grant.


\end{document}